\newtheorem{question}{Question}
\newtheorem{thm}{Theorem}[section]
\newtheorem{definition}[thm]{Definition}
\newtheorem{remark}[thm]{Remark}
\newtheorem{lemma}[thm]{Lemma}
\newtheorem{prop}[thm]{Proposition}
\newtheorem{corollary}[thm]{Corollary}
\renewcommand{\phi}{\varphi}
\renewcommand{\setminus}{\smallsetminus}
\title[Minimal equivalence relations]{Minimal Equivalence Relations\\
in Hyperarithmetical and Analytical Hierarchies}
\author[N.~Bazhenov]{Nikolay Bazhenov}
\address{Sobolev Institute of Mathematics, 4 Acad. Koptyug Ave., Novosibirsk, 630090 Russia \\
Department of Mathematics and Mechanics, Novosibirsk State University, 2 Pirogova St., Novosibirsk, 630090 Russia}
\email{\href{mailto:bazhenov@math.nsc.ru}{bazhenov@math.nsc.ru}}
\author[M.~Mustafa]{Manat Mustafa}
\address{Department of Mathematics, School of Sciences and Humanities, Nazarbayev University,
53 Qabanbay Batyr Ave., Nur-Sultan, 010000 Kazakhstan}
\email{\href{manat.mustafa@nu.edu.kz}{manat.mustafa@nu.edu.kz}}
\author[L.~San Mauro]{Luca San Mauro}
\address{Institute of Discrete Mathematics and Geometry, Vienna University of Technology, Vienna, Austria}
\email{luca.san.mauro@tuwien.ac.at}
\author[M.~Yamaleev]{Mars Yamaleev}
\address{N.I.
Lobachevskii Institute of Mathematics and Mechanics, Kazan (Volga
Region) Federal University, 18 Kremlyovskaya str., Kazan, 420008
Russia}
\email{\href{mailto:mars.yamaleev@kpfu.ru}{mars.yamaleev@kpfu.ru}}
\thanks{The work was supported by Nazarbayev University Faculty Development Competitive Research Grants N090118FD5342. Part of the research contained in this paper was carried out while Bazhenov, San Mauro, and Yamaleev were visiting the Department of Mathematics of Nazarbayev University, Nur-Sultan. The authors wish to thank Nazarbayev University for its hospitality. Bazhenov was supported by the
grant of the President of the Russian Federation
(No.~MK-1214.2019.1).  San Mauro was supported by the Austrian
Science Fund FWF, project~M~2461. Yamaleev was supported by the
Russian Science Foundation, project No.~18-11-00028.}
\subjclass{03D30, 03D55}
\keywords{computable reducibility, equivalence relation, hyperarithmetical hierarchy, analytical hierarchy, minimal degree}
\begin{document}

\maketitle

\begin{abstract}
A standard tool for classifying the complexity of equivalence relations on $\omega$ is provided by computable reducibility. This reducibility gives rise to a rich degree structure. The paper studies equivalence relations, which induce minimal degrees with respect to computable reducibility. Let $\Gamma$ be one of the following classes: $\Sigma^0_{\alpha}$, $\Pi^0_{\alpha}$, $\Sigma^1_n$, or $\Pi^1_n$, where $\alpha \geq 2$ is a computable ordinal and $n$ is a non-zero natural number. We prove that there are infinitely many pairwise incomparable minimal equivalence relations that are properly in $\Gamma$.
\end{abstract}

\section{Introduction}

The paper studies recursion-theoretic complexity of equivalence relations on the domain $\omega$. Our main working tool is \emph{computable reducibility}.

\begin{definition}
    Let $R$ and $S$ be equivalence relations on the domain $\omega$. The relation $R$ is \emph{computably reducible} to $S$ (denoted by $R \leq_c S$) if there is a computable function $f(x)$ such that for all $x,y\in\omega$, the following holds: $(xRy)\ \Leftrightarrow\ (f(x)Sf(y))$.
\end{definition}

We write $R\equiv_c S$ if $R\leq_c S$ and $S\leq_c R$. Throughout the paper, we assume that every considered equivalence relation has domain $\omega$.

The systematic study of $c$-degrees, i.e. degrees induced by computable reducibility, was initiated by Ershov~\cite{Ershov-71,Ershov-Book}. His approach is motivated by the theory of numberings, specifically by its category-the\-o\-re\-tic facets. In 1980s, the research of $c$-degrees was concentrated on classifying the complexity of computably enumerable equivalence relations (or \emph{ceers} for short): in particular, the  provable equivalence in formal systems was in the spotlight, see, e.g., \cite{Ber-81,BS-83}. Note that the acronym \emph{ceer} was introduced in the paper~\cite{GG-01}. Andrews and Sorbi~\cite{AS-ta} provided a deep analysis of algebraic properties for the $c$-deg\-rees of ceers. For a detailed exposition of the state-of-the-art results on ceers, the reader is referred to, e.g., \cite{AS-ta,ABS-survey,AS-18}.

The recent works~\cite{NgYu,BMSSY,BK-ta} started systematic investigations of $c$-degrees for $\Delta^0_2$ equivalence relations. We note that computable reducibility has been also studied for higher levels of the hyperarithmetical hierarchy, \emph{but} these studies were largely focused on \emph{complete} equivalence relations.

Let $\Gamma$ be a complexity class (e.g., $\Pi^0_1$, $\Sigma^0_n$,
or $\Sigma^1_1$). An equivalence relation $R$ is called
\emph{$\Gamma$-complete} (under computable reducibility) if $R\in
\Gamma$ and every equivalence relation $E\in \Gamma$ satisfies
$E\leq_c R$. Known examples of $\Gamma$-complete equivalence
relations include:
\begin{itemize}
    \item The relation of provable equivalence in Peano arithmetic is $\Sigma^0_1$-complete \cite{BS-83}.

    \item $1$-equivalence and $m$-equivalence on indices of c.e. sets are both $\Sigma^0_3$-complete \cite{FFN-12}.

    \item The relation of computable isomorphism on (computable indices for) the class of computable Boolean algebras is $\Sigma^0_3$-complete \cite{FFN-12}.

    \item For every natural number $n$, $1$-equivalence on indices of $\emptyset^{(n+1)}$-c.e. sets is $\Sigma^0_{n+4}$-complete \cite{IMNN-14}.

    \item For every computable successor ordinal $\alpha$, the relation of $\Delta^0_{\alpha}$ isomorphism on the class of computable distributive lattices is $\Sigma^0_{\alpha+2}$-complete \cite{BMY-19}.

    \item The isomorphism relation on the class of computable linear orders is $\Sigma^1_1$-complete \cite{FFH-12}
\end{itemize}
For further results on $\Gamma$-complete equivalence relations, we refer the reader to, e.g., \cite{IMNN-14}.

The goal of this paper is to investigate hyperarithmetical equivalence relations, which are \emph{far from} being $\Gamma$-complete. Note the following simple fact: if an equivalence relation $R$ has infinitely many classes, then for every computable equivalence relation $F$ having only finitely many classes, we have $F\leq_c R$. This observation suggests the following natural notion of minimality.

For a non-zero natural number $n$, by $\mathrm{Id}_n$ we denote the following equivalence relation:
\[
    (x,y) \in \mathrm{Id}_n \ \Leftrightarrow\ n \text{ divides } (x-y).
\]
Clearly, if a computable equivalence relation $F$ has precisely $n$ classes, then $F$ is $\equiv_c$-equiva\-l\-ent to $\mathrm{Id}_n$.

\begin{definition}[essentially formulated in Theorem~3.3 of \cite{AS-ta}]
    We say that an equivalence relation $R$ is \emph{minimal} if $R$ has infinitely many equivalence classes and for any equivalence relation $E$, the following holds:
    \[
        E\leq_c R\ \Rightarrow\ (E\equiv_c R) \vee (\exists n)(E\equiv_c \mathrm{Id}_n).
    \]
\end{definition}

It is not hard to see that the identity relation $\mathrm{Id}$ is minimal. Furthermore, Andrews and Sorbi (Theorem~3.3 of \cite{AS-ta}) proved that there are minimal ceers $E_i$, $i\in\omega$, such that they are pairwise $\leq_c$-in\-com\-pa\-rable and $\mathrm{Id} \nleq_c E_i$ for every $i$.

For a complexity class $\Gamma$, by $\breve{\Gamma}$ we denote the \emph{dual class} of $\Gamma$. For example, if $\Gamma = \Sigma^0_{\alpha}$, then $\breve{\Gamma} = \Pi^0_{\alpha}$. If $\Gamma = \Pi^1_n$, then $\breve{\Gamma} = \Sigma^1_n$. We say that an equivalence relation $R$ is a \emph{proper $\Gamma$ relation} if $R$ belongs to $\Gamma \setminus \breve{\Gamma}$.

The structure of the paper is as follows. Section~\ref{sect:main-res} contains a general sufficient condition for the existence of minimal equivalence relations (Theorem~\ref{theo:main}). Section~\ref{sect:consequences} discusses the consequences of Theorem~\ref{theo:main}.   For every computable ordinal $\alpha \geq 2$, we show that there are infinitely many pairwise $\leq_c$-in\-com\-pa\-rable, minimal, proper $\Sigma^0_{\alpha}$ equivalence relations. Similar results are obtained for the classes $\Pi^0_{\alpha}$, $\Sigma^1_n$, and $\Pi^1_n$, where $1 \leq n < \omega$.


\section{Existence of Minimal Equivalence Relations} \label{sect:main-res}

This section proves the following sufficient condition for the
existence of minimal equivalence relations (by $\Sigma^0_1(X)$ we
denote the sets which are $\Sigma^0_1$ with oracle $X$):

\begin{thm} \label{theo:main}
    Let $X$ be an oracle such that $X\geq_T \emptyset'$.
    \begin{itemize}
        \item[(a)] There are minimal equivalence relations $E_i$, $i\in\omega$, such that $E_i$ are pairwise $\leq_c$-incomparable, and $E_i \in \Sigma^0_1(X)\setminus \Pi^0_1(X)$ for every $i$.

        \item[(b)] There are minimal equivalence relations $F_i$, $i\in\omega$, such that $F_i$ are pairwise $\leq_c$-incomparable, and $F_i \in \Pi^0_1(X)\setminus \Sigma^0_1(X)$ for every $i$.
    \end{itemize}
    Furthermore, for every $i\in\omega$, every $E_i$-class and every $F_i$-class are computably enumerable.
\end{thm}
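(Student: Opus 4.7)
The plan is to relativize the construction of pairwise incomparable minimal ceers (Theorem~3.3 of \cite{AS-ta}) to the oracle $X\geq_T\emptyset'$, while additionally diagonalizing to force properness in the stated class and arranging that each individual class is plain c.e. For part (a) I would build $\{E_i\}_{i\in\omega}$ simultaneously by an $X$-oracle tree-of-strategies priority argument, starting each $E_i$ from $\mathrm{Id}$ and enumerating pairs (thus merging classes) at stages. The requirements for each $i$ are:
\begin{itemize}
\item $\mathcal{M}^i_e$: either $\varphi_e$ is non-total, or its range meets only finitely many $E_i$-classes, or a computable $g_e$ is built with $x\,E_i\,y\iff\varphi_e(g_e(x))\,E_i\,\varphi_e(g_e(y))$;
\item $\mathcal{I}^i_{e,j}$ (for $j\neq i$): $\varphi_e$ is not a reduction $E_i\leq_c E_j$;
\item $\mathcal{P}^i_e$: $E_i$ differs from the $e$-th $\Pi^0_1(X)$ relation on $\omega\times\omega$.
\end{itemize}

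The $\mathcal{M}$-strategy follows the Andrews--Sorbi pattern: fix a target class $[c]_{E_i}$ and wait for $\varphi_e$ to enter it; if this never happens, collapse remaining candidate classes into $[c]_{E_i}$ (so $\ran(\varphi_e)$ meets only finitely many classes); otherwise transfer $c$'s role to the first such witness and extend $g_e$. The $\mathcal{I}$-strategy uses fresh $E_i$-distinct witnesses $x,y$ and, upon convergence of $\varphi_e(x),\varphi_e(y)$, either preserves the $E_i$-split while forcing an $E_j$-merge of the images or does the converse. The $\mathcal{P}$-strategy is a straightforward pair diagonalization. Because $X\geq_T\emptyset'$, all $\Pi^0_2$ guesses needed by the tree can be approximated through $X$ via the standard true-path method, so the resulting $E_i$ are $X$-c.e. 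To make each class plain c.e., I would fix in advance a computable partition of $\omega$ into infinite c.e.~sets $B_0,B_1,\ldots$ and arrange that each $E_i$-class is a finite union of some of these $B_n$'s; since along the true path each strategy acts only finitely often, each $B_n$ participates in only finitely many merges, so every class is a finite union of c.e.~sets, hence c.e.

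Part (b) is dual: build $F_i=\bigcap_s F_{i,s}$ as a decreasing $X$-computable chain of equivalence relations, progressively splitting classes rather than merging them, with $\mathcal{P}^i_e$ now diagonalizing against $\Sigma^0_1(X)$. Along the true path each strategy acts finitely often, forcing each class of $F_i$ to stabilize at a finite stage and thus to be c.e.

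The main obstacle is the classical tension between $\mathcal{M}^i_e$ (which presses to merge classes) and $\mathcal{I}^i_{e,j}$ (which presses to keep them apart), resolved as in \cite{AS-ta} by a priority tree with protected witnesses, now run $X$-recursively. A secondary subtlety is preserving the c.e.~nature of individual classes despite $X$-dependent construction choices---handled by the block scheme sketched above for (a) and by finite-stage stabilization for (b).
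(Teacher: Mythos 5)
Your plan is a genuinely different route from the paper's (which does not re-run any priority construction: it takes a single dark minimal ceer $R$ of Andrews--Sorbi as given, proves that \emph{any} equivalence relation with infinitely many classes that \emph{extends} a dark minimal ceer is automatically minimal, and then simply glues selected pairs of $R$-classes together according to an $X$-c.e.\ set $B_k$ with $X\leq_T B_k\nleq_T B_j\oplus X$; properness, incomparability, and the c.e.-ness of classes all fall out of elementary Turing-degree bookkeeping). Unfortunately, your route as described has a genuine gap, and it is exactly the gap that the paper's ``extend a fixed dark minimal ceer'' trick is designed to avoid. The issue is the requirement $\mathcal{M}^i_e$: you must produce a \emph{plain computable} function $g_e$ reducing $E_i$ to the pullback relation $x\mathrel{S_e}y\iff\varphi_e(x)\mathrel{E_i}\varphi_e(y)$, because computable reducibility only allows computable reductions even though $E_i$ itself lives in $\Sigma^0_1(X)$. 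In the Andrews--Sorbi construction this works because the construction is recursive, so the values of $g_e$, which are chosen dynamically at stages of the construction, are computable in the argument. In your construction the stages are $X$-recursive, so the function $g_e$ you build is a priori only $X$-computable, and $X$ can be far above $\emptyset'$. You give no mechanism for extracting a computable reduction back, and the standard one (search the enumeration of $E_i$ for some $y$ with $\varphi_e(y)\in[x]_{E_i}$) is only an $X$-search. The paper's Proposition on extensions of dark minimal ceers resolves precisely this: the search is carried out in the computable enumeration of the \emph{ceer} $R\subseteq E_k$, and correctness of the resulting reduction uses only $R\subseteq E_k$.

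Two secondary points. First, your device for making classes c.e.\ (a fixed computable partition into infinite blocks, each class a finite union of blocks) needs an argument that no block is merged infinitely often; the minimality/darkness strategies in an Andrews--Sorbi-style construction perform unboundedly much collapsing, and in an $X$-recursive construction the set of blocks absorbed into a class is only $X$-c.e., so ``finite union'' is doing all the work and is not justified. Second, the sketch for part (b) (a decreasing $X$-computable chain that splits classes) inherits the same minimality problem in a worse form and, if every class really stabilized at a finite stage, it is unclear how $F_i$ would avoid $\Sigma^0_1(X)$. The paper handles (b) with the identical extension trick, merging $[a_{2j}]_R$ with $[a_{2j+1}]_R$ exactly when $j\notin B_k$, so that $F_k$ still extends the ceer $R$ and every class is still a union of at most two $R$-classes.
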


Before proving Theorem~\ref{theo:main}, we give two useful facts about minimal equivalence relations. Recall that a ceer $R$ is called \emph{dark} if $R$ is incomparable with $\textrm{Id}$ under computable reducibility (Definition~3.1 of \cite{AS-ta}).

\begin{prop}[Andrews and Sorbi~{\cite{AS-ta}}] \label{prop:min-ceer}
    Let $R$ be a dark ceer. Then the following conditions are equivalent:
    \begin{enumerate}
        \item $R$ is minimal.

        \item For any c.e. set $W$, if $W$ intersects infinitely many $R$-classes, then $W$ intersects all $R$-clas\-ses.
    \end{enumerate}
\end{prop}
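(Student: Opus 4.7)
The plan is to prove the two implications separately, both arguing via contrapositives (in spirit). The key intuition is that condition (2) is saying that no c.e. set of representatives can ``skip'' a class, which is exactly the structural property one needs to pull back a reduction from $E$ to $R$ into a reduction from $R$ to $E$.

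For $(2)\Rightarrow(1)$, I would suppose $E\leq_c R$ witnessed by some computable $f$, and split on whether $E$ has finitely or infinitely many classes. The finite case falls out immediately: a ceer with finitely many classes is $\equiv_c \mathrm{Id}_n$. In the infinite case, the set $W=\range(f)$ is c.e.\ and meets infinitely many $R$-classes because $f$ is a reduction from an equivalence relation with infinitely many classes. Condition (2) then forces $W$ to meet \emph{every} $R$-class. Using that $R$ is c.e., for each $x$ I can computably search for the first $n$ such that $f(n)\rel{R} x$; call this $g(x)$. Since $f$ is a reduction, $g(x)\rel{E} g(y)\iff f(g(x))\rel{R} f(g(y))\iff x\rel{R} y$, so $g$ witnesses $R\leq_c E$ and hence $E\equiv_c R$.

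For $(1)\Rightarrow(2)$, suppose for contradiction that some c.e.\ set $W$ meets infinitely many $R$-classes while missing a class $[a]_R$. Fixing an effective enumeration $W=\{w_0,w_1,\dots\}$, I would define $E$ on $\omega$ by $n\rel{E} m\iff w_n \rel{R} w_m$. Then $n\mapsto w_n$ shows $E\leq_c R$, and $E$ has infinitely many classes. Minimality of $R$ therefore forces $E\equiv_c R$, so composing the two reductions yields a computable $h\colon\omega\to\omega$ with $x\rel{R} y\iff h(x)\rel{R} h(y)$ and $\range(h)\subseteq W$.

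The punchline, and the step I expect to be the trickiest to set up cleanly, is extracting a computable copy of $\mathrm{Id}$ inside $R$ from the iterates of $h$ based at $a$, which will contradict the assumption that $R$ is dark. The function $k(n)=h^n(a)$ is plainly computable; the point is to verify that its values lie in pairwise distinct $R$-classes. If $h^n(a)\rel{R} h^m(a)$ with $n<m$, iterating the reduction property of $h$ backwards yields $a\rel{R} h^{m-n}(a)$, placing $h^{m-n}(a)$ in $[a]_R\subseteq\omega\setminus W$; but $h^{m-n}(a)\in\range(h)\subseteq W$, a contradiction. Hence $k$ reduces $\mathrm{Id}$ to $R$, contradicting darkness of $R$ and completing the proof. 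The only real obstacle is making sure the iteration argument is stated carefully enough that the reduction property transfers cleanly from $h$ to its iterates, and that the base case $[a]_R\cap W=\emptyset$ is used exactly where needed.
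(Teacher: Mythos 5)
Your proposal is correct and follows essentially the same route as the paper: the $(2)\Rightarrow(1)$ direction pulls back the reduction by searching for preimages landing in each $R$-class, and the $(1)\Rightarrow(2)$ direction builds the auxiliary ceer $E$ from an enumeration of $W$ and then uses iterates of the composed map based at a point of a missed class to embed $\mathrm{Id}$ into $R$, contradicting darkness. The only cosmetic difference is that you argue $(1)\Rightarrow(2)$ by contradiction where the paper argues the contrapositive; the mathematical content, including the backward-iteration step landing $a_0$ in a $W$-class, is identical.
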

\begin{proof}
    This fact follows from Lemmas~3.4 and~3.5 of~\cite{AS-ta}, but for the sake of completeness, we outline the proof of the fact.

    $(1)\Rightarrow(2)$. Suppose that there is a c.e. set $W$ such that $W$ intersects infinitely many, but not all $R$-classes. Fix a computable injective function $g(x)$ with $range(g)=W$, and define a ceer $S$ as follows:
    $ (x S y)\ \Leftrightarrow\ (g(x) R g(y))$.
    Clearly, $S\leq_c R$, and $S$ has infinitely many classes. In order to prove that $R$ is not minimal, it is sufficient to show that $S\not\equiv_c R$.

    Towards a contradiction, assume that $R\leq_c S$ via a computable function $f$. Choose an element $a\in\omega$ such that $[a]_R \cap W =\emptyset$, and consider a sequence of numbers defined as follows: $a_0 := a$ and $a_{n+1} := g(f(a_n))$. We claim that for any $i<j$, the elements $a_i$ and $a_j$ are not $R$-equivalent. Indeed, if $(a_i R a_j)$, then we have the following sequence of implications:
    \begin{multline*}
        (g(f(a_{i-1})) R g(f(a_{j-1})))\ \Rightarrow\ (f(a_{i-1}) S f(a_{j-1}))\ \Rightarrow\ (a_{i-1} R a_{j-1})\  \Rightarrow \\ \ (a_{i-2} R a_{j-2}) \  \Rightarrow\ \dots
        \Rightarrow\ (a_0 R a_{j-i}),
    \end{multline*}
    where $a_{j-i} = g(f(a_{j-i-1})) \in W$. Thus, $W$ intersects with the class $[a]_R$, which contradicts the choice of $a$. Hence, now we know that the elements $a_i$, $i\in\omega$, are pairwise not $R$-equivalent.

    This shows that the function $h(x):=a_x$ provides a reduction $\mathrm{Id} \leq_c R$, which contradicts the darkness of $R$. Therefore, we obtain that $S <_c R$, and $R$ is not minimal.

    $(2)\Rightarrow(1)$. Suppose that $R$ satisfies the second condition. Consider an arbitrary ceer $E$ with infinitely many classes such that $E\leq_c R$ via a function $f$. In order to finish the proof, it is sufficient to show that $R\leq_c E$.

    The c.e. set $range(f)$ intersects infinitely many $R$-classes, and hence, $range(f)$ intersects all $R$-classes. Therefore, the desired reduction $g$ from $R$ into $E$ can be defined as follows: for $x\in\omega$, choose $g(x)$ as a number $y_x$ such that $f(y_x)$ is the first (under a fixed enumeration of the ceer $R$) element with $f(y_x)\in [x]_R$. Clearly, we have: $(xRx')$ iff $(f(y_x) R f(y_{x'}))$ iff $(g(x) E g(x'))$. Proposition~\ref{prop:min-ceer} is proved.
\end{proof}

Proposition~\ref{prop:min-ceer} implies the following fact about equivalence relations, which are \emph{not} necessarily ceers:

\begin{prop}\label{prop:min-arbitrary}
    Let $E$ be a dark minimal ceer, and let $R$ be an arbitrary equivalence relation such that $R$ has infinitely many classes and $R\supseteq E$. Then $R$ is minimal.
\end{prop}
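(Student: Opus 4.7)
Let $S$ be any equivalence relation with $S\leq_c R$, witnessed by a computable function $f$; the task is to show that either $S$ has only finitely many classes or $S\equiv_c R$. My first step is to pull $f$ back to the ceer $E$ by defining a new relation $S'$ on $\omega$ via $xS'y \Leftrightarrow f(x)Ef(y)$. Since $E$ is a ceer and $f$ is computable, $S'$ is itself a ceer, and $S'\leq_c E$ via $f$ by construction. As $E$ is minimal, we obtain a dichotomy: either $S'$ has only finitely many classes (equivalently, $f(\omega)$ meets only finitely many $E$-classes), or $S'\equiv_c E$.

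If $f(\omega)$ meets only finitely many $E$-classes then, because $E\subseteq R$ forces every $R$-class to be a union of $E$-classes, $f(\omega)$ also meets only finitely many $R$-classes, so $S$—being the $f$-pullback of $R$—has only finitely many classes and $S\equiv_c\mathrm{Id}_n$ for some $n$. In the other case, $S'$ has infinitely many classes, so $f(\omega)$ is a c.e.\ set intersecting infinitely many $E$-classes. Here I would apply Proposition~\ref{prop:min-ceer} to the dark minimal ceer $E$ to upgrade this to the stronger statement that $f(\omega)$ meets \emph{every} $E$-class, and in particular every $R$-class.

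It remains to construct a reduction $g:R\leq_c S$. Since $E$ is c.e., the binary relation $\{(x,y):f(y)Ex\}$ is c.e., and the previous step guarantees that for each $x$ there exists some $y$ with $f(y)Ex$. Set $g(x)$ to be the least such $y$; this is a total computable function satisfying $f(g(x))Ex$, and therefore $f(g(x))Rx$ because $E\subseteq R$. It follows that $xRx' \Leftrightarrow f(g(x))Rf(g(x')) \Leftrightarrow g(x)Sg(x')$, yielding $R\leq_c S$ and hence $S\equiv_c R$. The main obstacle I anticipate is exactly this final reduction: the proof of Proposition~\ref{prop:min-ceer} leaned on the c.e.-ness of $R$ to hunt for preimages, which is no longer available, and the crucial idea is to use the c.e.\ equivalence relation $E$ as a computable proxy for $R$ in the search, exploiting $E\subseteq R$ to lift the resulting $E$-equivalence to an $R$-equivalence.
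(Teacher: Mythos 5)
Your overall strategy is the same as the paper's: split on whether $f(\omega)$ meets finitely or infinitely many $E$-classes, invoke Proposition~\ref{prop:min-ceer} in the infinite case to get that $f(\omega)$ meets \emph{every} $E$-class, and then use the c.e.\ relation $E$ as a computable proxy for the possibly very complicated $R$ when searching for preimages. Your treatment of the infinite case is correct and is indeed the crux of the argument, modulo one small repair: you set $g(x)$ to be the \emph{least} $y$ with $f(y) \mathrel{E} x$, but since $\{(x,y) : f(y) \mathrel{E} x\}$ is only c.e., the least such $y$ cannot be computed; you must take the first $y$ produced by a fixed enumeration of $E$, as the paper does. Any computably selected witness works, so this is cosmetic.

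The genuine gap is in the finite case. From ``$f(\omega)$ meets only finitely many $R$-classes'' you conclude both that $S$ has finitely many classes \emph{and} that $S \equiv_c \mathrm{Id}_n$. The second conclusion does not follow from the first: an equivalence relation with finitely many classes need not be computable (consider the two classes $A$ and $\omega \setminus A$ for a noncomputable set $A$), and such a relation is \emph{not} reducible to any $\mathrm{Id}_n$, since a reduction to a computable relation would make it computable. Because $S$ is the $f$-pullback of the arbitrary relation $R$, you must actually prove that $S$ is computable. The paper does this by fixing representatives $a_0, \dots, a_m$ of the finitely many $E$-classes met by $f(\omega)$ and observing that the map $h \colon x \mapsto a_i$, where $f(x) \in [a_i]_E$, is computable --- the enumeration of the ceer $E$ eventually certifies which of the finitely many classes $f(x)$ lies in --- so that $x S x'$ is decided by the finite table recording which pairs $(a_i, a_j)$ are $R$-related. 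This is a short argument and reuses exactly the idea you already deploy in the other case, but without it the finite case is unproved.
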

\begin{proof}
    Suppose that $S$ is an equivalence relation, and $f$ is a computable reduction from $S$ into $R$. Then precisely one of the following two cases holds:

    \emph{Case~1.} Assume that the set $range(f)$ intersects only finitely many $E$-classes. We emphasize that here we consider the classes of the ceer $E$, but not $R$-classes. Evidently, in this case $S$ also has finitely many classes.

    Then in a non-uniform way, we choose representatives $a_0,a_1,\dots,a_m$ of all $E$-classes which intersect $range(f)$. Since $E$ is a ceer, the function $h\colon x\mapsto a_i$, where $f(x) \in [a_i]_E$, is computable. Clearly, the condition $(xSx')$ is equivalent to $(h(x)Rh(x'))$. Since the set $range(h)$ is finite, we deduce that the relation $S$ is computable, and $S\equiv_c \mathrm{Id}_k$ for some $k\in\omega$.

    \emph{Case~2.} Assume that $range(f)$ intersects infinitely many $E$-classes. Then by Proposition~\ref{prop:min-ceer}, $range(f)$ intersects \emph{all} $E$-classes.

    We define a computable function $g$ as follows: for an element $x\in\omega$, choose $g(x)$ as a number $z_x$ such that the value $f(z_x)$ is the first (under a fixed enumeration of $E$) number with $f(z_x) \in [x]_E$. We claim that the function $g$ reduces $R$ to $S$. Indeed, since $E\subseteq R$, for arbitrary $x$ and $x'$, we have:
    \[
        (x R x')\ \Leftrightarrow\ (f(z_x) R f(z_{x'}))\ \Leftrightarrow\ (z_x S z_{x'})\ \Leftrightarrow\ (g(x) S g(x')).
    \]
    Therefore, we showed that $S\equiv_c R$. Hence, $R$ satisifies the definition of minimality. Proposition~\ref{prop:min-arbitrary} is proved.
\end{proof}

Now we are ready to obtain the main result of the section. By $\leq_{\omega}$ we denote the standard ordering of natural numbers.

\begin{proof}[Proof of Theorem~\ref{theo:main}]
    Recall that Andrews and Sorbi (Theorem~3.3 of \cite{AS-ta}) proved that there are infinitely many pairwise $\leq_c$-incomparable, dark minimal ceers.

    We choose just one such ceer $R$, and we find the sequence $(a_i)_{i\in\omega}$ containing the $\leq_{\omega}$-least representatives from all $R$-classes. More formally, this means that any number $x\in\omega$ is $R$-equivalent to some $a_i$, and for any $y<_{\omega} a_i$, $y$ is not $R$-equivalent to $a_i$.  Since $R$ is a ceer, it is clear that the sequence $(a_i)_{i\in\omega}$ is $\mathbf{0}'$-com\-pu\-table.

    The following auxiliary result can be obtained via an easy relativization of Exercise~2.2.(a) from Chapter~VII in~\cite{Soare}, so the proof of this result is omitted.

    \begin{lemma}\label{lem:aux}
        There is a uniform sequence of $X$-c.e. sets $(B_i)_{i\in\omega}$ such that for all $i\neq j$, we have $X \leq_T B_i \nleq_T B_j \oplus X$.
    \end{lemma}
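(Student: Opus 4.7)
The plan is to reduce the statement to a relativized version of the standard theorem, due to Friedberg and Muchnik, asserting the existence of an infinite uniform family of pairwise Turing-incomparable c.e. sets. Concretely, I would first build a uniformly $X$-c.e. sequence $(A_i)_{i\in\omega}$ satisfying $A_i \nleq_T A_j \oplus X$ for all $i \neq j$, and then set $B_i := A_i \oplus X$. Each $B_i$ is visibly $X$-c.e., $X \leq_T B_i$ is trivial, and $B_i \leq_T B_j \oplus X$ would give $A_i \leq_T A_j \oplus X$, contradicting the choice of the $A_i$'s. So everything reduces to producing the $A_i$'s.

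To obtain the $A_i$'s, I would run the usual Friedberg--Muchnik priority construction relativized to $X$, with the complete list of requirements
\[
R_{i,j,e} : \Phi_e^{A_j \oplus X} \neq A_i, \qquad i \neq j, \ e \in \omega,
\]
ordered by priority via a fixed bijection $\omega^3 \to \omega$. Here $\{\Phi_e\}_{e\in\omega}$ is the standard enumeration of oracle Turing functionals. Each $R_{i,j,e}$ follows the textbook strategy: pick a fresh witness $x=x_{i,j,e}$ earmarked for $A_i$, keep $x$ out of $A_i$ until a stage $s$ is found at which $\Phi_{e,s}^{A_{j,s}\oplus X}(x)\downarrow = 0$, then enumerate $x$ into $A_i$ and impose a restraint on $A_j$ up to the use of that witnessed computation. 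All enumerations and all functional evaluations are performed relative to the fixed oracle $X$, so the entire construction is $X$-computable and produces a uniformly $X$-c.e. sequence $(A_i)_{i\in\omega}$.

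The verification is the familiar finite-injury bookkeeping. Each requirement has higher priority than only finitely many others, and a higher-priority requirement acts at most once after it settles on a final witness; hence every $R_{i,j,e}$ is injured only finitely often. After its last injury, $R_{i,j,e}$ either waits forever, so $\Phi_e^{A_j\oplus X}(x) \neq 0 = A_i(x)$, or it diagonalizes successfully and its permanent restraint preserves $\Phi_e^{A_j\oplus X}(x) = 0$ while $x\in A_i$. The only (minor) technical point to check is that the restraints imposed along the priority list are eventually respected by lower-priority actions, which is precisely the routine content of the original exercise in Soare. The single adaptation is that $X$ appears as an additional fixed oracle in every computation and every enumeration; this does not interact with the priority bookkeeping in any nontrivial way, which is why the paper simply cites the relativization.
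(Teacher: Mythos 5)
Your proposal is correct and follows exactly the route the paper intends: the paper omits the proof, citing it as an easy relativization of Soare's Exercise VII.2.2(a) (a uniform family of pairwise Turing-incomparable c.e. sets via Friedberg--Muchnik), which is precisely the construction you carry out, together with the standard step of joining each $A_i$ with $X$ to secure $X \leq_T B_i$. No gaps.
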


    We prove item~(a) of the theorem. For an index $k\in \omega$, define an equivalence relation $E_k$ as follows: $E_k$ is the $\subseteq$-least equivalence relation such that
    \[
        E_k \supseteq R \cup \{ (a_{2j}, a_{2j+1}) \,\colon j\in B_k \}.
    \]
    Since $X\geq_T \emptyset'$ and the set $B_k$ is c.e. in $X$, it is clear that $E_k \in \Sigma^0_1(X)$. Moreover, it is not difficult to show that $E_k \leq_T B_k \oplus \emptyset' \leq_T B_k \oplus X \equiv_T B_k \leq_T E_k \oplus \emptyset'$. Note that any $E_k$-class is equal either to an $R$-class, or to a union of two $R$-classes. Thus, every $E_k$-class is a c.e. set.

    Since $B_k \nleq_T X$ and $B_k\leq_T E_k \oplus \emptyset' \leq_T E_k \oplus X$, we deduce that $E_k \nleq_T X$ and $E_k\not\in \Pi^0_1(X)$. Furthermore, $E_k\supseteq R$ and $E_k$ has infinitely many classes, hence, by Proposition~\ref{prop:min-arbitrary}, $E_k$ is minimal.

    Assume that $E_k \leq_c E_l$ for some $k\neq l$. Then we have $B_k \leq_T E_k \oplus \emptyset' \leq_T E_l \oplus \emptyset' \leq_T B_l\oplus \emptyset' \leq_T B_l \oplus X$, which contradicts the choice of the sequence $(B_i)_{i\in\omega}$. Therefore, the sequence of equivalence relations $(E_k)_{k\in\omega}$ has all desired properties.

    The proof of item~(b) of the theorem is essentially the same as that of the item~(a), modulo the following key modification: the relation $F_k$ is the $\subseteq$-least such that $F_k \supseteq R \cup \{ (a_{2j}, a_{2j+1}) \,\colon j\not\in B_k \}$.
    This concludes the proof of Theorem~\ref{theo:main}.
\end{proof}


\section{Consequences of the Main Result} \label{sect:consequences}

Theorem~\ref{theo:main} immediately implies the following fact:

\begin{corollary}\label{conseq:01}
    Let $\alpha \geq 2$ be a computable ordinal. There are infinitely many pairwise $\leq_c$-in\-com\-pa\-ra\-ble, minimal, proper $\Sigma^0_{\alpha}$ equivalence relations. A similar result holds for the class $\Pi^0_{\alpha}$.
\end{corollary}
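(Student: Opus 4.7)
The plan is to apply Theorem~\ref{theo:main} with an oracle $X$ calibrated to the target level $\alpha$, turning the $\Sigma^0_1(X)/\Pi^0_1(X)$ dichotomy in the theorem into the desired $\Sigma^0_{\alpha}/\Pi^0_{\alpha}$ dichotomy.

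First, consider a computable successor $\alpha = \beta + 1$ with $\beta \geq 1$. I would take $X = \emptyset^{(\beta)}$. Since $\beta \geq 1$ we have $X \geq_T \emptyset'$, so the hypothesis of Theorem~\ref{theo:main} is met. By the standard relativization of the arithmetical hierarchy, $\Sigma^0_1(X) = \Sigma^0_{\beta+1} = \Sigma^0_{\alpha}$ and $\Pi^0_1(X) = \Pi^0_{\alpha}$. Part~(a) of Theorem~\ref{theo:main} then directly outputs a sequence $(E_i)_{i\in\omega}$ of pairwise $\leq_c$-incomparable minimal equivalence relations with $E_i \in \Sigma^0_{\alpha}\setminus \Pi^0_{\alpha}$, and part~(b) outputs the analogous sequence $(F_i)_{i\in\omega}\subseteq \Pi^0_{\alpha}\setminus \Sigma^0_{\alpha}$. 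This handles the successor case without any further work.

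For a computable limit ordinal $\alpha$, the same strategy applies after fixing a notation for $\alpha$ and choosing $X$ to be a set whose Turing degree matches $\alpha$ in the hyperarithmetical hierarchy (for example, a canonical set built from a fundamental sequence $\beta_n \nearrow \alpha$, so that $\Sigma^0_1(X)$ realizes $\Sigma^0_{\alpha}$ under the paper's conventions). Once such an $X$ is in place, the derivation from Theorem~\ref{theo:main} is mechanical: the non-$\Pi^0_1(X)$ clause (resp.\ the non-$\Sigma^0_1(X)$ clause) in parts (a) and (b) translates directly into ``proper $\Sigma^0_{\alpha}$'' (resp.\ ``proper $\Pi^0_{\alpha}$''), and the pairwise incomparability and minimality are inherited from the theorem.

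The main obstacle is the limit case: one has to check that the auxiliary sets $B_k$ from Lemma~\ref{lem:aux} — and hence the resulting equivalence relations $E_k$ and $F_k$ — land properly at level $\alpha$ rather than collapsing to some $\Sigma^0_{\gamma}$ or $\Pi^0_{\gamma}$ for $\gamma < \alpha$. For successor $\alpha$ nothing of the sort arises and the corollary is truly immediate; for limit $\alpha$ the key identification $\Sigma^0_1(X) = \Sigma^0_{\alpha}$ (with the appropriate $X$) is exactly what needs to be justified, after which Theorem~\ref{theo:main} does the rest.
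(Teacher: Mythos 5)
Your strategy is the paper's: feed an appropriate jump of $\emptyset$ into Theorem~\ref{theo:main} and identify $\Sigma^0_1(X)$ with $\Sigma^0_{\alpha}$. For finite $\alpha$ your choice $X=\emptyset^{(\alpha-1)}$ is exactly what the paper does. The problem is your case split. You divide into successor versus limit ordinals, but the relevant division for the hyperarithmetical hierarchy is finite versus infinite: under the standard (Ash--Knight) convention, which the paper follows, one has $\Sigma^0_{\alpha}=\Sigma^0_1(\emptyset^{(\alpha-1)})$ only for finite $\alpha\geq 2$, while for \emph{every} $\alpha\geq\omega$ --- including infinite successors --- the identification is $\Sigma^0_{\alpha}=\Sigma^0_1(\emptyset^{(\alpha)})$. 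So for, say, $\alpha=\omega+1$ your prescription $X=\emptyset^{(\omega)}$ lands the relations properly in $\Sigma^0_{\omega}$, one level too low. The paper's oracle is $\emptyset^{(\alpha-1)}$ for $\alpha<\omega$ and $\emptyset^{(\alpha)}$ for $\alpha\geq\omega$, with no special treatment of limits.

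Separately, the ``main obstacle'' you locate in the limit case is not really there. Theorem~\ref{theo:main} already delivers $E_i\in\Sigma^0_1(X)\setminus\Pi^0_1(X)$; once $X$ is chosen so that $\Sigma^0_1(X)=\Sigma^0_{\alpha}$ (hence $\Pi^0_1(X)=\Pi^0_{\alpha}$), properness at level $\alpha$ is automatic, since any relation lying in $\Sigma^0_{\gamma}$ or $\Pi^0_{\gamma}$ for some $\gamma<\alpha$ would lie in $\Delta^0_{\alpha}\subseteq\Pi^0_1(X)$, contradicting the theorem. There is nothing further to check about the auxiliary sets $B_k$ ``collapsing''; the only thing that must be gotten right is the oracle--level correspondence, which is a standard fact about the $H$-sets rather than something the corollary needs to argue.
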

\begin{proof}
    Choose the oracle
    \[
        X := \begin{cases}
            \emptyset^{(\alpha - 1)}, & \text{if } \alpha <\omega,\\
            \emptyset^{(\alpha)}, & \text{if } \alpha \geq \omega,
        \end{cases}
    \]
    in Theorem~\ref{theo:main}.
\end{proof}

Note that Corollary~\ref{conseq:01} cannot be extended to the $\Pi^0_1$-case: it is not hard to show that for any $\Pi^0_1$ equivalence relation $E$ with infinitely many classes, we have $\mathrm{Id} \leq_c E$ (see, e.g., Proposition~3.1 of \cite{BMSSY}).

The ideas of the proof of Theorem~\ref{theo:main} also help us to deal with the levels of the analytical hierarchy:

\begin{prop}\label{prop:analytical}
    Let $n$ be a non-zero natural number. There are infinitely many pairwise $\leq_c$-in\-com\-pa\-rable, minimal, proper $\Pi^1_n$ equivalence relations. A similar result holds for the class $\Sigma^1_n$.
\end{prop}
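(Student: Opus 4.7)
The plan is to mimic the proof of Theorem~\ref{theo:main}, replacing Lemma~\ref{lem:aux} with an analytical analog. As in that proof, I would begin by fixing a dark minimal ceer $R$ (from Andrews and Sorbi) together with the $\mathbf{0}'$-computable enumeration $(a_i)_{i\in\omega}$ of the $\leq_\omega$-least representatives of the $R$-classes.

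The main step, and the principal technical obstacle, is to prove the following analog of Lemma~\ref{lem:aux}: for each fixed $n\geq 1$ there is a uniform sequence of sets $(B_i)_{i\in\omega}$, each of which is properly $\Sigma^1_n$, such that $B_i\not\leq_T B_j\oplus\emptyset'$ whenever $i\neq j$. This is a standard fact of analytical degree theory; it can be obtained by a Kleene--Post style finite-extension construction carried out relative to a universal $\Sigma^1_n$ set, diagonalizing simultaneously against the $\Pi^1_n$ sets (to ensure properness of each $B_i$) and against all Turing functionals $\Phi_e^{B_j\oplus\emptyset'}$ with $j\neq i$ (to ensure incomparability).

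Given such a sequence, for the $\Pi^1_n$-case I would define $F_k$ to be the $\subseteq$-least equivalence relation containing $R\cup\{(a_{2j},a_{2j+1})\,\colon j\notin B_k\}$. Since $\overline{B_k}\in\Pi^1_n$ and $\Pi^1_n$ is closed under number quantifiers and under joins with arithmetical sets, a short computation shows $F_k\in\Pi^1_n$. Minimality of $F_k$ follows from Proposition~\ref{prop:min-arbitrary}, using $F_k\supseteq R$ and the fact that $F_k$ still has infinitely many classes. For properness, $\overline{B_k}$ is computable from $F_k\oplus\emptyset'$ via $j\in\overline{B_k}\Leftrightarrow (a_{2j},a_{2j+1})\in F_k$, so $F_k\in\Sigma^1_n$ would force $B_k\in\Pi^1_n$, contradicting the choice of $(B_i)$. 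For pairwise incomparability, $F_k\leq_c F_l$ entails $F_k\leq_T F_l$, whence
\[
B_k\leq_T F_k\oplus\emptyset'\leq_T F_l\oplus\emptyset'\leq_T B_l\oplus\emptyset',
\]
again contradicting the choice of $(B_i)$.

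The $\Sigma^1_n$-case is symmetric: define $E_k$ to be the $\subseteq$-least equivalence relation containing $R\cup\{(a_{2j},a_{2j+1})\,\colon j\in B_k\}$, so that $E_k\in\Sigma^1_n$, and repeat the previous verifications verbatim with $B_k$ in place of $\overline{B_k}$. Apart from the preparatory existence lemma for $(B_i)$, every remaining step is a direct adaptation of the proof of Theorem~\ref{theo:main}.
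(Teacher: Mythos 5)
Your overall architecture (glue pairs of $R$-classes according to a set whose complexity is transferred to the relation) is sound, and granting your preparatory lemma, the verifications of minimality, properness, and incomparability would go through. But the lemma is where all the content lives, and it is not established. First, the proof you sketch does not prove it: a Kleene--Post finite-extension construction relative to a universal $\Sigma^1_n$ set produces sets that are (at best) $\Delta^0_2$ in that set, hence $\Delta^1_{n+1}$; diagonalizing against a list of all $\Pi^1_n$ sets can ensure $B_i\notin\Pi^1_n$, but nothing in a finite-extension argument ensures $B_i\in\Sigma^1_n$. Membership in a lightface definability class is not a requirement you can meet by generic diagonalization --- this is the same reason a Kleene--Post argument below $\emptyset'$ yields $\Delta^0_2$ sets rather than c.e.\ ones. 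Second, the lemma itself is far from ``a standard fact'': already for $n=1$, classical hyperarithmetic theory (see Sacks' book) shows that every $\Pi^1_1$ set of integers that is not $\Delta^1_1$ has the same hyperdegree as Kleene's $\mathcal{O}$; hence all candidate properly $\Sigma^1_1$ sets $B_i$ are pairwise hyperarithmetically equivalent, and arranging them to be pairwise Turing-incomparable over $\emptyset'$, uniformly, is a substantial problem rather than a routine exercise. Your proof therefore has a genuine gap at its load-bearing step.

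The paper's proof is designed precisely to avoid needing such a sequence. It decouples the two requirements: properness is witnessed by a \emph{single} $m$-complete $\Pi^1_n$ set $B$, the same for every $k$, which is coded into the class $[a_0]_{E_k}$ by collapsing $\{a_{2i}\colon i\in B\}$; incomparability is witnessed by a sequence $(C_k)$ of merely \emph{hyperarithmetical} pairwise Turing-incomparable sets, obtained from the already-available Lemma~\ref{lem:aux} with $X=\emptyset^{(2)}$, coded into the separate class $[a_1]_{E_k}$. The price of this decoupling is that incomparability no longer follows from a crude Turing-degree computation on the whole relation (since all the $E_k$ now compute the same complete set $B$); instead one needs the additional observation that a computable reduction $E_k\leq_c E_l$ must carry $[a_1]_{E_k}$ into $[a_1]_{E_l}$ --- these being the unique non-c.e.\ but hyperarithmetical classes --- after which the incomparability of the $C_k$ does the work. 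If you want to salvage your approach, you would either have to prove your existence lemma (a nontrivial piece of analytical degree theory) or adopt the paper's two-class decoupling trick.
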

\begin{proof}
    As in the proof of Theorem~\ref{theo:main}, we fix a dark minimal ceer $R$ and the sequence $(a_i)_{i\in\omega}$ containing the $\leq_{\omega}$-least representatives of all $R$-classes.

    Let $B$ be an $m$-complete $\Pi^1_n$ set. Choose an arbitrary sequence $(C_k)_{k\in\omega}$ of hyperarithmetical sets such that $C_k$ are pairwise Turing incomparable and $C_k\geq_T \emptyset^{(2)}$ for all $k$. Such a sequence can be obtained, e.g., by applying Lemma~\ref{lem:aux} to the oracle $X = \emptyset^{(2)}$. For an index $k\in\omega$, the relation $E_k$ is the $\subseteq$-least equivalence relation such that
    \[
        E_k \supseteq R \cup \{ (a_{2i}, a_{2j}) \,\colon i,j \in B\} \cup \{ (a_{2i+1},a_{2j+1})\,\colon i,j \in C_k\}.
    \]
    Since the set $\omega\setminus B$ is infinite, $E_k$ has infinitely many equivalence classes. Thus, by Proposition~\ref{prop:min-arbitrary}, $E_k$ is minimal.

    Define a $\mathbf{0}'$-computable total function $g(x)$ as follows: for a number $x$, $g(x)$ is equal to the index $i$ such that $a_i\in [x]_R$. It is not hard to show that the condition $(x E_k y)$ is true if and only if at least one of the following conditions holds:
    \begin{itemize}
        \item[(a)] $g(x) = g(y)$;

        \item[(b)] both values $g(x)$ and $g(y)$ are odd, $[g(x)/2] \in C_k$, and $[g(y)/2]\in C_k$;

        \item[(c)] both values $g(x)$ and $g(y)$ are even, $[g(x)/2] \in B$, and $[g(y)/2] \in B$.
    \end{itemize}
    The last condition can be re-written in the following form:
    \[
        \exists u \exists v[ (g(x) = 2u) \,\&\, (g(y) = 2v) \,\&\, (u\in B) \,\&\, (v\in B) ].
    \]
    Therefore, a standard application of the Tarski--Kuratowski algorithm shows that the relation $E_k$ is $\Pi^1_n$.

    Note that $B\leq_T E_k \oplus \emptyset'$. Towards a contradiction, assume that $E_k$ is a $\Delta^1_n$ relation. Then the set $E_k\oplus \emptyset'$ is $\Delta^1_n$, and $B$ is $\Delta^1_1$ relative to $E_k \oplus \emptyset'$. By the result of Shoenfield (see, e.g., Proposition~5.2 in Chapter~II of~\cite{Sacks}), we deduce that $B$ is a $\Delta^1_n$ set, which gives a contradiction. Therefore, $E_k$ is a proper $\Pi^1_n$ relation.

    In order to prove that $E_k$, $k\in\omega$, are pairwise $\leq_c$-incomparable, we employ the following easy observation: Let $S$ and $T$ be arbitrary equivalence relations. If a computable function $f$ provides a reduction $S\leq_c T$, then for every element $x_0\in\omega$, we have $f\colon [x_0]_S \leq_m [f(x_0)]_T$.

    Without loss of generality, we may assume that $0 \in B\cap C_k$ for all $k$. Then it is not hard to show that $E_k$ has only two equivalence classes which are not c.e.~--- the classes of $a_0$ and $a_1$. Indeed, the class $[a_0]_{E_k}$ is not even hyperarithmetical. Moreover, $\emptyset^{(2)} \leq_T C_k \leq_T [a_1]_{E_k} \oplus \emptyset'$ and $[a_1]_{E_k} \leq_T C_k \oplus \emptyset' \equiv_T C_k$.

    Assume that a computable function $f$ gives a reduction $E_k \leq_c E_l$ for some $k\neq l$. Then by employing the observation above, we consider the $m$-degrees of the equivalence classes, and we deduce that $f(a_0) \in [a_0]_{E_l}$ and $f(a_1) \in [a_1]_{E_l}$. Hence, we have $f\colon [a_1]_{E_k} \leq_m [a_1]_{E_l}$. Thus, $C_k \leq_T [a_1]_{E_k} \oplus \emptyset' \leq_T [a_1]_{E_l} \oplus \emptyset' \leq_T C_l \oplus \emptyset' \equiv_T C_l$, which contradicts the choice of the sequence $(C_i)_{i\in\omega}$. Therefore, the relations $E_k$, $k\in\omega$, are pairwise $\leq_c$-incomparable.

    The proof for $\Sigma^1_n$ equivalence relations is essentially the same, modulo the following modification: one needs to choose $B$ as an $m$-complete $\Sigma^1_n$ set. Proposition~\ref{prop:analytical} is proved.
\end{proof}

Note that the equivalence relations $E_k$, $k\in\omega$, of Proposition~\ref{prop:analytical} are more intricate than those of Theorem~\ref{theo:main}: now each $E_k$ has precisely two non-c.e. classes.

{\bf Remark.} The desired $c$-degrees from Corollary~\ref{conseq:01}
and Proposition~\ref{prop:analytical} are proper for a given level
and also dark. This extends some results about proper and dark
$c$-degrees from~\cite{BMSSY}.

\bigskip


\begin{thebibliography}{99}

\bibitem{Ershov-71}
Yu.~L.~Ershov, \textquotedblleft Positive equivalences,\textquotedblright\ Algebra Logic \textbf{10} (6), 378--394 (1971).

\bibitem{Ershov-Book}
Yu.~L.~Ershov, \emph{Theory of Numberings} (Nauka, Moscow, 1977) [in Russian].

\bibitem{Ber-81}
C.~Bernardi, \textquotedblleft On the relation provable equivalence and on partitions in effectively inseparable sets,\textquotedblright\  Stud. Log. \textbf{40} (1), 29--37 (1981).

\bibitem{BS-83}
C.~Bernardi and A.~Sorbi, \textquotedblleft Classifying positive equivalence relations,\textquotedblright\  J. Symb. Logic \textbf{48} (3), 529--538 (1983).

\bibitem{GG-01}
S.~Gao and P.~Gerdes, \textquotedblleft Computably enumerable equivalence relations,\textquotedblright\ Stud. Log. \textbf{67} (1), 27--59 (2001).

\bibitem{AS-ta}
U.~Andrews and A.~Sorbi, \textquotedblleft Joins and meets in the structure of ceers,\textquotedblright\ Computability, published online, doi: 10.3233/COM-180098

\bibitem{ABS-survey}
U.~Andrews, S.~Badaev, and A.~Sorbi, \textquotedblleft A survey on universal computably enumerable equivalence relations,\textquotedblright\ in \emph{Computability and Complexity}, Lect. Notes Comput. Sci. \textbf{10010}, 418--451 (2017).

\bibitem{AS-18}
U.~Andrews and A.~Sorbi, \textquotedblleft Jumps of computably enumerable equivalence relations,\textquotedblright\ Ann. Pure Appl. Logic, \textbf{169} (3), 243--259 (2018).

\bibitem{NgYu}
K.~M.~Ng and H.~Yu, \textquotedblleft On the degree structure of equivalence relations under computable reducibility,\textquotedblright\  Notre Dame J. Formal Logic, to appear.

\bibitem{BMSSY}
N.~Bazhenov, M.~Mustafa, L.~San Mauro, A.~Sorbi, and M.~Yamaleev,
\textquotedblleft Classifying equivalence relations in the Ershov hierarchy,\textquotedblright\ preprint, arXiv:1810.03559.

\bibitem{BK-ta}
N.~A.~Bazhenov and B.~S.~Kalmurzaev,
\textquotedblleft On weakly precomplete equivalence relations in the Ershov hierarchy,\textquotedblright\ Algebra Logic, \textbf{58} (3), (2019).

\bibitem{FFN-12}
E.~Fokina, S.~Friedman, and A.~Nies, \textquotedblleft Equivalence relations that are $\Sigma^0_3$ complete for computable reducibility,\textquotedblright\ in \emph{Logic, Language, Information and Computation}, Lect. Notes Comput. Sci. \textbf{7456}, 26--33 (2012).

\bibitem{IMNN-14}
E.~Ianovski, R.~Miller, K.~M.~Ng, and A.~Nies,  \textquotedblleft  Complexity of equivalence relations and preorders from computability theory,\textquotedblright\ J. Symb. Logic \textbf{79} (3), 859--881 (2014).

\bibitem{BMY-19}
N.~Bazhenov, M.~Mustafa, and M.~Yamaleev, \textquotedblleft Computable isomorphisms of distributive lattices,\textquotedblright\ in \emph{Theory and Applications of Models of Computation}, Lect. Notes Comput. Sci. \textbf{11436}, 28--41 (2019).

\bibitem{FFH-12}
E.~B.~Fokina, S.~Friedman, V.~Harizanov, J.~F.~Knight, C.~McCoy, and A.~Montalb{\'a}n, \textquotedblleft Isomorphism relations on computable structures,\textquotedblright\ J. Symb. Logic \textbf{77} (1), 122--132 (2012).

\bibitem{Soare}
R.~I.~Soare, \emph{Recursively enumerable sets and degrees} (Springer, Berlin, 1987).

\bibitem{Sacks}
G.~E.~Sacks, \emph{Higher recursion theory} (Springer, Berlin, 1990).

\end{thebibliography}
\end{document}